\newtheorem{theorem}{Theorem}
\newtheorem{corollary}[theorem]{Corollary}
\newtheorem{lemma}[theorem]{Lemma}
\newtheorem{proposition}[theorem]{Proposition}
\newenvironment{proof}[1][\textbf{Proof}]{\noindent\textbf{#1.} }{\mbox{ } \hfill \rule{0.5em}{0.5em}\medskip}
\renewcommand*\env@cases[1][1.2]{
\let\@ifnextchar\new@ifnextchar
\left\lbrace
\def\arraystretch{#1}
\array{@{}l@{\quad}l@{}}
}
\begin{document}

\title{Note on a sign-dependent regularity for the polyharmonic Dirichlet
problem}
\author{Inka Schnieders\thanks{
Department Mathematik/Informatik, Universit\"{a}t zu K\"{o}ln, Weyertal
86-90, 50931 K\"{o}ln, Germany, ischnied@math.uni-koeln.de,
gsweers@math.uni-koeln.de}~ \& Guido Sweers$^{\ast }$}
\maketitle

\begin{abstract}
A priori estimates for semilinear higher order elliptic equations usually
have to deal with the absence of a maximum principle. This note presents
some regularity estimates for the polyharmonic Dirichlet problem that will
make a distinction between the influence on the solution of the positive and
the negative part of the right-hand side.
\end{abstract}

\noindent {\footnotesize \textbf{Keywords:} polyharmonic, m-laplace problem,
higher order, sign-dependent regularity}\smallskip

\noindent{\footnotesize \textbf{Mathematics Subject Classification (MSC)
2020:} primary 35J30, secondary 35B45, 35J86}

\section{Introduction and main result}

Let $\Omega \subset \mathbb{R}^{n}$ be a bounded domain with $\partial
\Omega \in C^{2m,\gamma }$ with $m\in \mathbb{N}^{+}$ and $\gamma \in \left(
0,1\right) $, and consider the Dirichlet problem for the poly-laplace
operator:%
\begin{equation}
\left\{
\begin{array}{cc}
\left( -\Delta \right) ^{m}u=f & \text{in }\Omega , \\
u=\frac{\partial }{\partial n}u=\dots =\left( \frac{\partial }{\partial n}%
\right) ^{m-1}u=0 & \text{on }\partial \Omega .%
\end{array}%
\right.  \label{poly}
\end{equation}%
Suppose that%
\begin{equation}
f^{+}:=\max \left( 0,f\right) \text{ \ and \ }f^{-}:=\max \left( 0,-f\right)
\label{pm}
\end{equation}
is such that $f^{+}\in L^{p_{+}}(\Omega )$ and $f^{-}\in L^{p-}(\Omega )$
with $p_{+},p_{-}\in \left( 1,\infty \right) $. For the second order case,
that is $m=1$, one may use the maximum principle and solve%
\begin{equation*}
\left\{
\begin{array}{c}
-\Delta u^{\oplus }=f^{+} \\
u^{\oplus }=0%
\end{array}%
\ \text{and\ }%
\begin{array}{cc}
-\Delta u^{\ominus }=f^{-} & \text{in }\Omega , \\
u^{\ominus }=0 & \text{on }\partial \Omega ,%
\end{array}%
\right.
\end{equation*}%
separately to find $u=u^{\oplus }-u^{\ominus }$\ for%
\begin{equation*}
0\leq u^{\oplus }\in W^{2,p_{+}}(\Omega )\cap W_{0}^{1,p_{+}}(\Omega )\text{
\ and \ }0\leq u^{\ominus }\in W^{2,p_{-}}(\Omega )\cap
W_{0}^{1,p_{-}}(\Omega ),
\end{equation*}%
with the usual regularity estimates (\cite{ADN1}):%
\begin{equation}
\left\Vert u^{\oplus }\right\Vert _{W^{2m,p_{+}}\left( \Omega \right) }\leq
c_{p_{+}}\left\Vert f^{+}\right\Vert _{L^{p_{+}}(\Omega )}\text{ and }%
\left\Vert u^{\ominus }\right\Vert _{W^{2m,p_{-}}\left( \Omega \right) }\leq
c_{p_{+}}\left\Vert f^{-}\right\Vert _{L^{p_{-}}(\Omega )}.  \label{mis2}
\end{equation}%
The constants will depend on $\Omega $, but that dependence we will suppress
in our notation.\medskip

Whenever $m\geq 2$ there is no maximum principle or, unless we have a
special domain like a ball \cite{Boggio}, a positivity preserving property
in the sense that $f\geq 0$ in (\ref{poly}) would result in $u\geq 0$.
Nevertheless, it is possible to find a result quite similar to (\ref{mis2})
for the solution of (\ref{poly}). Such a separation of the regularity for
the positive and negative part is something we need for a higher order
semilinear problem that we consider in \cite{SS3}. Since we believe it has
some interest in itself, we present this sign-dependent regularity in this
separate note.\medskip

Our main result for (\ref{poly}) with $m\in \mathbb{N}^{++}:=\left\{
2,3,\dots \right\} $ is as follows:

\begin{theorem}
\label{plusminusregthm}Let $\Omega \subset \mathbb{R}^{n}$ be a bounded
domain with $\partial \Omega \in C^{2m,\gamma }$ and let $p_{\pm }\in \left(
1,\infty \right) $. Suppose that $f=f^{+}-f^{-}$ as in (\ref{pm}) with $%
f^{+}\in L^{p_{+}}( \Omega ) $ and $f^{-}\in L^{p_{-}}( \Omega) $. Then
there exist constants $c_{p_{+},m},c_{p_{-},m}>0$, independent of $%
f^{+},f^{-}$, such that the following holds. The unique solution $u$ of (\ref%
{poly}) can be written as $u=u^{\oplus }-u^{\ominus }$, with%
\begin{gather*}
0\leq u^{\oplus }\in W^{2m,p_{+}}(\Omega )\cap W_{0}^{m,p_{+}}(\Omega ), \\
0\leq u^{\ominus }\in W^{2m,p_{-}}(\Omega )\cap W_{0}^{m,p_{-}}(\Omega ),
\end{gather*}%
and
\begin{gather*}
\left\Vert u^{\oplus }\right\Vert _{W^{2m,p_{+}}\left( \Omega \right) }\leq
c_{p_{+},m}\left( \left\Vert f^{+}\right\Vert _{L^{p_{+}}(\Omega
)}+\left\Vert f^{-}\right\Vert _{L^{1}(\Omega )}\right) , \\
\left\Vert u^{\ominus }\right\Vert _{W^{2m,p_{-}}\left( \Omega \right) }\leq
c_{p_{-},m}\left( \left\Vert f^{-}\right\Vert _{L^{p_{-}}(\Omega
)}+\left\Vert f^{+}\right\Vert _{L^{1}(\Omega )}\right) .
\end{gather*}
\end{theorem}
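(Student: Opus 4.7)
The plan is to reduce problem~(\ref{poly}) to the two polyharmonic Dirichlet problems driven by $f^{+}$ and $f^{-}$ separately, apply the classical ADN regularity of \cite{ADN1} to each, and then reshuffle the two solutions into a difference of nonnegative pieces. The asymmetric appearance of $\|f^{-}\|_{L^{1}}$ in the estimate for $u^{\oplus}$ (and symmetrically for $u^{\ominus}$) will come from measuring the negative part of each auxiliary solution only in $L^{1}$, via an estimate for the Green function of $(-\Delta)^{m}$ on $\Omega$.

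Concretely, I let $v_{+},v_{-}\in W^{2m,p_{\pm}}(\Omega)\cap W_{0}^{m,p_{\pm}}(\Omega)$ be the unique solutions of (\ref{poly}) with right-hand sides $f^{+}$ and $f^{-}$, respectively; by linearity $u=v_{+}-v_{-}$ and \cite{ADN1} gives $\|v_{\pm}\|_{W^{2m,p_{\pm}}}\le c\|f^{\pm}\|_{L^{p_{\pm}}}$. For $m=1$ the maximum principle forces $v_{\pm}\ge 0$ and we are done, but for $m\ge 2$ the $v_{\pm}$ generally change sign, so the task reduces to finding, for every $0\le g\in L^{1}(\Omega)$, a nonnegative $W^{2m,q}\cap W_{0}^{m,q}$-majorant (for every $q\in(1,\infty)$) of the negative part of the corresponding solution, whose norm is controlled by $\|g\|_{L^{1}(\Omega)}$ only. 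This is the content of the following key lemma: there exist a fixed nonnegative $\psi\in C^{2m,\gamma}(\overline{\Omega})\cap W_{0}^{m,q}(\Omega)$ (for all $q\in(1,\infty)$) and a constant $C>0$ such that, for every $0\le g\in L^{1}(\Omega)$, the solution $w_{g}$ of (\ref{poly}) with $f=g$ satisfies $w_{g}^{-}(x)\le C\,\psi(x)\|g\|_{L^{1}(\Omega)}$ for every $x\in\Omega$.

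To prove the key lemma I represent $w_{g}(x)=\int_{\Omega}G(x,y)g(y)\,dy$ via the Dirichlet Green function $G$ of $(-\Delta)^{m}$ on $\Omega$, split $G=G^{+}-G^{-}$, and invoke the \emph{almost-positivity} estimate
\[
0\le G^{-}(x,y)\le c\,d(x)^{m}d(y)^{m}\qquad\text{on }\Omega\times\Omega,\qquad d(z):=\mathrm{dist}(z,\partial\Omega),
\]
for the polyharmonic Dirichlet Green function on $C^{2m,\gamma}$ bounded domains, known through the Krasovskii / Grunau--Sweers analysis (cf.\ the monograph of Gazzola--Grunau--Sweers). This bound yields $w_{g}^{-}(x)\le c\,d(x)^{m}(\mathrm{diam}\,\Omega)^{m}\|g\|_{L^{1}}$, so I may take $\psi$ to be any fixed smooth nonnegative function on $\overline{\Omega}$ that agrees with $d^{m}$ in a tubular neighbourhood of $\partial\Omega$; then $\psi\in W^{2m,q}\cap W_{0}^{m,q}$ for every $q\in(1,\infty)$ thanks to $\partial\Omega\in C^{2m,\gamma}$.

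With the key lemma at hand, I set $\overline{v}_{\pm}:=v_{\pm}+C\|f^{\pm}\|_{L^{1}}\psi$. By construction $\overline{v}_{\pm}\ge 0$ and, using $\|f^{\pm}\|_{L^{1}}\le c_{\Omega}\|f^{\pm}\|_{L^{p_{\pm}}}$,
\[
\|\overline{v}_{\pm}\|_{W^{2m,p_{\pm}}}\le c\bigl(\|f^{\pm}\|_{L^{p_{\pm}}}+\|f^{\pm}\|_{L^{1}}\bigr)\le c\|f^{\pm}\|_{L^{p_{\pm}}}.
\]
Rewriting
\[
u=v_{+}-v_{-}=\bigl(\overline{v}_{+}+C\|f^{-}\|_{L^{1}}\psi\bigr)-\bigl(\overline{v}_{-}+C\|f^{+}\|_{L^{1}}\psi\bigr)
\]
and setting $u^{\oplus}$, $u^{\ominus}$ equal to the two parenthesised nonnegative terms, the triangle inequality in $W^{2m,p_{\pm}}$ delivers exactly the asymmetric estimates of Theorem~\ref{plusminusregthm}. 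The main obstacle is not this rearrangement (which is essentially bookkeeping) but the almost-positivity bound on $G^{-}$: it is the only nontrivial analytic input beyond \cite{ADN1}, it is the step that uses the full $C^{2m,\gamma}$-regularity of $\partial\Omega$, and without it the argument degenerates to the symmetric ADN bound $\|u\|_{W^{2m,\min(p_{+},p_{-})}}\le c\|f\|_{L^{\min(p_{+},p_{-})}}$, losing the separation of exponents.
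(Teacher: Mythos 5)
Your proof is correct and follows essentially the same route as the paper: both hinge on the Grunau--Robert--Sweers lower bound $G_{\Omega,m}(x,y)\ge -c\,d(x)^{m}d(y)^{m}$, both restore nonnegativity by adding a multiple of a $C^{2m,\gamma}(\overline{\Omega})\cap W_{0}^{m,q}$-function comparable to $d^{m}$, and both read off the estimate from ADN plus the embedding $L^{p_\pm}\hookrightarrow L^{1}$. The only material difference is cosmetic: the paper replaces the Lipschitz $d$ by the torsion function, taking $w=e_{1}^{m}$ with $-\Delta e_{1}=1$, which automatically satisfies $c_{1}d^{m}\le w\le c_{2}d^{m}$ and $w\in C^{2m,\gamma}(\overline{\Omega})$, and then works with a rank-one smoothing operator $\mathcal{D}f:=\hat c_2\,w(\cdot)\int w\,f$ rather than the scalar $\|f\|_{L^{1}}\psi$; your cutoff construction of $\psi$ works too, but you should make sure $\psi$ is chosen strictly positive in the interior so that it genuinely dominates $d^{m}$ on all of $\Omega$, not just near $\partial\Omega$.
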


Although we will construct $u^{\oplus }$, $u^{\ominus }$ in a way such that $%
u^{\oplus }$, $u^{\ominus }$ is unique, the statement in the theorem does
not give uniqueness of this decomposition $u^{\oplus },u^{\ominus }$. Since $%
f\in L^{p}(\Omega )$ with $p=\min \left\{ p_{-},p_{+}\right\} >1$ and $%
\partial \Omega \in C^{2m,\gamma }$, the solution is unique in $%
W^{2m,p}(\Omega )\cap W_{0}^{m,p}(\Omega )$.\medskip

Generically $u^{\oplus }\neq u^{+}$, but since $u^{+}=\left( u^{\oplus
}-u^{\ominus }\right) ^{+}\leq \left( u^{\oplus }\right) ^{+}=u^{\oplus }$,
we find that%
\begin{equation}
-u^{\ominus }\leq -u^{-}\leq 0\leq u^{+}\leq u^{\oplus }.  \label{mmpp}
\end{equation}%
With this estimate one also finds a signed Sobolev inequality. Setting
\begin{equation}
q_{n,m,p}:=\frac{np}{n-2mp}  \label{qnmp}
\end{equation}%
we may combine with the Sobolev imbedding theorem, see \cite[Theorem 4.12]%
{AF}, to obtain the following:

\begin{corollary}
\label{cor}Let $\Omega \subset \mathbb{R}^{n}$ be bounded with $\partial
\Omega \in C^{2m,\gamma }$ and let $p_{\pm }\in \left( 1,\infty \right) $.
Suppose that $f=f^{+}-f^{-}$ with $f^{+}\in L^{p_{+}}\left( \Omega \right) $
and $f^{-}\in L^{p_{-}}\left( \Omega \right) $. Let $u$ be the solution of (%
\ref{poly}) as in Theorem \ref{plusminusregthm}. Then the following holds:

\begin{enumerate}
\item If moreover $p_{+}\leq \frac{n}{2m}$ (so $n>2m$) and $q\in \left[
1,q_{n,m,p_{+}}\right] $ with $q<\infty $, then there is $%
c_{p_{+},q,m}^{\prime }>0$ such that%
\begin{equation*}
\left\Vert u^{+}\right\Vert _{L^{q}(\Omega )}\leq c_{p_{+},q,m}^{\prime
}\left( \left\Vert f^{+}\right\Vert _{L^{p_{+}}(\Omega )}+\left\Vert
f^{-}\right\Vert _{L^{1}(\Omega )}\right) .
\end{equation*}

\item If moreover $p_{+}>\frac{n}{2m}$, then there is $c_{p_{+},m}^{\prime
}>0$ such that%
\begin{equation*}
\sup u\leq c_{p_{+},m}^{\prime }\left( \left\Vert f^{+}\right\Vert
_{L^{p_{+}}(\Omega )}+\left\Vert f^{-}\right\Vert _{L^{1}(\Omega )}\right) .
\end{equation*}
\end{enumerate}
\end{corollary}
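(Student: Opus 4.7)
The plan is to derive this corollary as a direct consequence of Theorem~\ref{plusminusregthm} combined with the pointwise chain (\ref{mmpp}) and the standard Sobolev embedding theorem. The key observation is that both $u^+$ and $\sup u$ can be controlled by $u^\oplus$ in a pointwise sense, which lets us transfer the regularity of $u^\oplus$ (obtained from Theorem~\ref{plusminusregthm} in terms of $\|f^+\|_{L^{p_+}}$ and $\|f^-\|_{L^1}$) to the desired integral and sup-bounds on the solution.

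The first step is to record that, by (\ref{mmpp}), $0\le u^+\le u^\oplus$ pointwise in $\Omega$, so for every finite $q\ge 1$
\begin{equation*}
\|u^+\|_{L^q(\Omega)}\le \|u^\oplus\|_{L^q(\Omega)},
\end{equation*}
and similarly $\sup u\le \sup u^\oplus$ since $u^\ominus\ge 0$. The second step is to invoke \cite[Theorem~4.12]{AF}: in case~(1), $p_+\le n/(2m)$ implies the continuous embedding $W^{2m,p_+}(\Omega)\hookrightarrow L^q(\Omega)$ for every $q\in[1,q_{n,m,p_+}]$ with $q<\infty$ (the case $p_+=n/(2m)$, where $q_{n,m,p_+}$ formally blows up, still gives embedding into every finite $L^q$); in case~(2), $p_+>n/(2m)$ yields $W^{2m,p_+}(\Omega)\hookrightarrow C^0(\overline\Omega)$, hence $\sup u^\oplus\le C\|u^\oplus\|_{W^{2m,p_+}(\Omega)}$.

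The final step is simply to combine these inequalities with the estimate
\begin{equation*}
\|u^\oplus\|_{W^{2m,p_+}(\Omega)}\le c_{p_+,m}\bigl(\|f^+\|_{L^{p_+}(\Omega)}+\|f^-\|_{L^1(\Omega)}\bigr)
\end{equation*}
provided by Theorem~\ref{plusminusregthm}, which produces the claimed constants $c'_{p_+,q,m}$ and $c'_{p_+,m}$.

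I do not foresee any real obstacle: every ingredient is either already proved in Theorem~\ref{plusminusregthm} or is classical Sobolev theory. The only point deserving a brief comment is the borderline case $p_+=n/(2m)$ in part~(1), where one must note that $q_{n,m,p_+}$ is interpreted as $+\infty$ and the condition $q<\infty$ is needed to remain within the scope of the Sobolev embedding.
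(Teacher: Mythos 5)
Your proof is correct and follows exactly the route indicated in the paper: use the pointwise inequality $0\le u^{+}\le u^{\oplus }$ from (\ref{mmpp}) (and $u\le u^{\oplus }$ since $u^{\ominus }\ge 0$), then apply the Sobolev embedding \cite[Theorem 4.12]{AF} to the $W^{2m,p_{+}}$-estimate for $u^{\oplus }$ provided by Theorem \ref{plusminusregthm}. Your remark on the borderline case $p_{+}=n/(2m)$ is a correct and useful clarification of why the hypothesis $q<\infty $ is stated.
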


Similar results depending on $p_{-}$ hold for $u^{-}$ and $\sup \left(
-u\right) $.

\section{Relation to previous results}

Since the fundamental contributions by Agmon, Douglis and Nirenberg \cite%
{ADN1} it is known, assuming that $\Omega $ is bounded with a smooth enough
boundary, that for each $p\in \left( 1,\infty \right) $ and $f\in
L^{p}\left( \Omega \right) $ a solution of (\ref{poly}) satisfies $u\in
W^{2m,p}\left( \Omega \right) $. Whenever the solution is unique, and with
the $C^{2m,\gamma}$-boundary the solution for (\ref{poly}) is unique for any
$p\in \left( 1,\infty \right) $, there exist $C_{m,p}>0$, independent of $f$%
, such that%
\begin{equation}
\left\Vert u\right\Vert _{W^{2m,p}(\Omega )}\leq C_{m,p}\left\Vert
f\right\Vert _{L^{p}(\Omega )}.  \label{adnest}
\end{equation}%
Whenever $p\in \left( 1,\frac{n}{2m}\right) $, and such $p$ exist when $n>2m$%
, the Sobolev imbedding shows that for $q\leq q_{n,m,p}$, as in (\ref{qnmp}%
), a constant $C_{m,p,q}^{\prime }>0$ exists such that
\begin{equation*}
\left\Vert u\right\Vert _{L^{q}(\Omega )}\leq C_{m,p,q}^{\prime }\left\Vert
u\right\Vert _{W^{2m,p}(\Omega )}.
\end{equation*}%
Combining both estimates will lead to an estimate as in Corollary (\ref{cor}%
) but then without the sign. However, since $0\leq u^{+}\leq u^{\oplus }$
holds, one finds $\left\Vert u^{+}\right\Vert _{L^{q}(\Omega )}\leq
\left\Vert u^{\oplus }\right\Vert _{L^{q}(\Omega )}$ and one is left with
proving the result in Theorem \ref{plusminusregthm}. \medskip

For $\Omega =\mathbb{R}^{n}$ signed estimates as in the corollary will
follow directly from the Riesz potential $\mathcal{I}_{2m}$, \cite{Riesz},
for the Riesz potential solution of $\left( -\Delta \right) ^{m}u=f$, when $%
f $ goes to zero at $\infty $ in an appropriate sense. Indeed, see \cite[%
Chapter V]{Stein}, that solution is given by
\begin{equation}
u(x)=\left( \mathcal{I}_{2m}f\right) (x):=\tfrac{\Gamma \left( \frac{1}{2}%
n-m\right) }{\pi ^{\frac{1}{2}n}4^{m}\Gamma \left( m\right) }\int_{\mathbb{R}%
^{n}}\left\vert x-y\right\vert ^{2m-n}f(y)dy.  \label{rp}
\end{equation}%
Since the kernel in (\ref{rp}) is positive, it allows one to consider
separately the influence of $f^{+}\in L^{p_{+}}(\mathbb{R}^{n})$ and $%
f^{-}\in L^{p_{-}}(\mathbb{R}^{n})$ with $p_{+},p_{-}\in \left( 1,\frac{n}{2m%
}\right) $. Indeed, on $\mathbb{R}^{n}$ the function $u=u^{\oplus
}-u^{\ominus }$ with
\begin{equation*}
u^{\oplus }(x):=\left( \mathcal{I}_{2m}f^{+}\right) (x)\text{ and }%
u^{\ominus }\left( x\right) :=\left( \mathcal{I}_{2m}f^{+}\right) (x)
\end{equation*}%
is such that $u^{\oplus }\in L^{q_{+}}(\mathbb{R}^{n})$ and $u^{\ominus }\in
L^{q_{-}}(\mathbb{R}^{n})$ with $q_{\pm }:=q_{n,m,p_{\pm }}$. \medskip

On a bounded domain with homogeneous Dirichlet boundary conditions the
crucial ingredient that allows us to consider $f^{+}$ and $f^{-}$
separately, comes from \cite{GRobS}. There one finds that the Green function
$G_{\Omega ,m}$ for (\ref{poly}), that is%
\begin{equation}
u\left( x\right) =\left( \mathcal{G}f\right) \left( x\right) :=\int_{\Omega
}G_{\Omega ,m}\left( x,y\right) f\left( y\right) dy  \label{green}
\end{equation}%
solves (\ref{poly}),\ is such that the following estimate holds for some $%
\tilde{c}_{1,\Omega },\tilde{c}_{2,\Omega },\tilde{c}_{3,\Omega }>0$:
\begin{equation}
\tilde{c}_{1,\Omega }H\left( x,y\right) \leq G_{\Omega ,m}\left( x,y\right) +%
\tilde{c}_{2,\Omega }d\left( x\right) ^{m}d\left( y\right) ^{m}\leq \tilde{c}%
_{3,\Omega }H\left( x,y\right)  \label{grsest}
\end{equation}%
with $H:\overline{\Omega }\times \overline{\Omega }\rightarrow \lbrack
0,\infty ]$ defined by%
\begin{equation}
H\left( x,y\right) =\left\{
\begin{array}{cc}
\left\vert x-y\right\vert ^{2m-n}\min \left( 1,\frac{d\left( x\right)
d\left( y\right) }{\left\vert x-y\right\vert ^{2}}\right) ^{m} & \text{for }%
n>2m, \\[3mm]
\log \left( 1+\left( \frac{d\left( x\right) d\left( y\right) }{\left\vert
x-y\right\vert ^{2}}\right) ^{m}\right) & \text{for }n=2m, \\[2mm]
\left( d\left( x\right) d\left( y\right) \right) ^{m-n/2}\min \left( 1,\frac{%
d\left( x\right) d\left( y\right) }{\left\vert x-y\right\vert ^{2}}\right)
^{n/2} & \text{for }n<2m.%
\end{array}%
\right.  \label{Hest}
\end{equation}%
Here $d$ is the distance to the boundary $\partial \Omega $:
\begin{equation*}
d\left( x\right) =d\left( x,\partial \Omega \right) :=\inf \left\{
\left\vert x-x^{\ast }\right\vert ;x^{\ast }\in \partial \Omega \right\} .
\end{equation*}%
The estimate in (\ref{grsest}) allows us to separate the solution operator
in a signed singular part and a smooth bounded part. The singular part will
have the same regularity properties as in (\ref{adnest}) from \cite{ADN1},
but the fixed sign allows us to separate $f^{+}$ and $f^{-}$. \medskip

One may wonder how general such signed regularity estimates may hold for
higher order elliptic boundary value problems. Such estimates are known for
pure powers of the negative laplacian $-\Delta $. Pure powers of second
order elliptic operators with constant coefficients may be allowed and they
may even be perturbed by small lower order terms. See \cite{Pulst}. However,
a recent paper \cite{GRomS} shows examples of higher order elliptic
operators, even with constant coefficients, for which the singularity at $%
x=y $ is sign-changing. Obviously for such a problem there is no estimate
like (\ref{grsest}) possible.

\section{The proof}

The distance function $d$ is at most Lipschitz, even on $C^{\infty }$%
-domains. So as a first step we will replace $d(x)^{m}d(y)^{m}$ in (\ref%
{grsest}) by a smoother function, namely by $w(x)w(y)$ where
\begin{equation}
w:=e_{1}^{m}  \label{w}
\end{equation}
and $e_{1} $ is the solution of%
\begin{equation*}
\left\{
\begin{array}{cc}
-\Delta e_{1}=1 & \text{in }\Omega , \\
e_{1}=0 & \text{on }\partial \Omega .%
\end{array}%
\right.
\end{equation*}

\begin{lemma}
\label{Dlem}This function $w$ from (\ref{w}) inherits the smoothness of the
boundary $\partial \Omega $, in the sense that $\partial \Omega \in
C^{2m,\gamma }$ implies $w\in C^{2m,\gamma }(\overline{\Omega })$. Moreover,
there exist $c_{1},c_{2}>0$ such that%
\begin{equation}
c_{1}~d\left( x\right) ^{m}\leq w\left( x\right) \leq c_{2}~d\left( x\right)
^{m}\text{ for all }x\in \Omega .  \label{wed}
\end{equation}
\end{lemma}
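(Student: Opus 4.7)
The plan splits into two parts, corresponding to the regularity claim and the two-sided pointwise estimate (\ref{wed}).

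For the $C^{2m,\gamma}$-regularity of $w$, I would apply the classical Schauder/Agmon--Douglis--Nirenberg boundary estimates to the torsion problem defining $e_1$. Since the right-hand side $1$ is $C^\infty(\overline{\Omega})$ and $\partial\Omega \in C^{2m,\gamma}$, a bootstrap (or a direct application of the standard higher-regularity Schauder theorem with $k=2m-2$) gives $e_1 \in C^{2m,\gamma}(\overline{\Omega})$. Because $C^{2m,\gamma}(\overline{\Omega})$ is a Banach algebra under pointwise multiplication, $w = e_1^m$ lies in the same space.

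For the upper bound in (\ref{wed}), I would use that $e_1 \in C^1(\overline{\Omega})$ vanishes on $\partial\Omega$: given $x \in \Omega$, choosing a nearest boundary point $x^* \in \partial\Omega$ and applying the mean value theorem on the segment $[x^*,x]$ gives $e_1(x) \leq \|\nabla e_1\|_{L^\infty(\Omega)}\, d(x)$, whence $w(x) \leq c_2\, d(x)^m$ with $c_2 = \|\nabla e_1\|_\infty^m$.

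For the lower bound, the strong maximum principle applied to $-\Delta e_1 = 1 > 0$ gives $e_1 > 0$ in $\Omega$, and Hopf's boundary point lemma gives $\partial e_1/\partial n(x^*) < 0$ at every $x^* \in \partial\Omega$. Since $\nabla e_1$ is continuous up to $\partial\Omega$ (by the regularity just proved) and $\partial\Omega$ is compact, there exist $\kappa,\delta > 0$ such that $e_1(x) \geq \kappa\, d(x)$ whenever $d(x) \leq \delta$. On the closed interior set $\{x\in\overline{\Omega} : d(x) \geq \delta\}$ the positive continuous function $e_1$ is bounded below by some $c_{*} > 0$, and combined with $d(x) \leq \operatorname{diam}(\Omega)$ this extends a linear lower bound to all of $\Omega$. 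Taking the smaller of the two constants and raising to the $m$-th power yields the lower bound in (\ref{wed}). I do not expect any real obstacle; the only point requiring a bit of care is the matching of the near-boundary Hopf estimate with the interior positivity via compactness, which is standard once $\nabla e_1 \in C(\overline{\Omega})$ is in hand.
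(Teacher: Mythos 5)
Your proof is correct and follows essentially the same route as the paper: Schauder regularity (the paper cites \cite[Theorem 6.19]{GT}) gives $e_1\in C^{2m,\gamma}(\overline{\Omega})$ and hence $w=e_1^m\in C^{2m,\gamma}(\overline{\Omega})$, the mean value theorem gives the upper bound, and Hopf's boundary point lemma gives the lower bound. The only cosmetic difference is that the paper invokes a uniform Hopf boundary point lemma in one stroke, whereas you unfold it into the pointwise Hopf lemma plus compactness of $\partial\Omega$ and continuity of $\nabla e_1$ up to the boundary, which is the standard way to obtain the uniform version.
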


\begin{proof}
By the maximum principle and more precisely a uniform Hopf's boundary point
lemma, which holds for $\partial \Omega \in C^{1,\gamma }$, one finds that\
a constant $C_{\text{H}}>0$ exists with $e_{1}\left( x\right) \geq C_{\text{H%
}} ~d(x)$ for all $x\in \overline{\Omega }$. Since $e_{1}\in C^{1}(\overline{%
\Omega })\cap C_{0}(\overline{\Omega })$, one finds another constant $C_e>0$
such that $e_{1}(x)\leq C_e~d(x)$ for all $x\in \Omega $. By \cite[Theorem
6.19]{GT} $e_{1}\in C^{2m,\gamma }(\overline{\Omega })$ and hence $%
w=e_{1}^{m}\in C^{2m,\gamma }(\overline{\Omega })$ and satisfies (\ref{wed}).
\end{proof}

Since the function $H\left( \cdot ,\cdot \right) $ from (\ref{Hest})
satisfies for some $C_{\text{GRS}}>0$
\begin{equation*}
H\left( x,y\right) \geq C_{\text{GRS}}\left( d(x)d(y)\right) ^{m}\text{ for
all }x,y\in \overline{\Omega }
\end{equation*}%
there exists $\hat{c}_{1},\hat{c}_{2},\hat{c}_{3}>0$ such that the following
variant of (\ref{grsest}) holds:
\begin{equation}
\hat{c}_{1}H\left( x,y\right) \leq G_{\Omega ,m}\left( x,y\right) +\hat{c}%
_{2}\ w(x)\ w(y)\leq \hat{c}_{3}H\left( x,y\right) .  \label{grsestbis}
\end{equation}

We do not directly replace $d$ in $H$ by $e_{1}$, but instead define the
function $H_{\Omega ,m}:\overline{\Omega }\times \overline{\Omega }%
\rightarrow \left[ 0,\infty \right] $ by%
\begin{equation}
H_{\Omega ,m}\left( x,y\right) :=G_{\Omega ,m}\left( x,y\right) +\hat{c}%
_{2}\ w\left( x\right) \,w\left( y\right) .\text{ }  \label{hem}
\end{equation}%
In the next theorem we will state some properties of the operator $\mathcal{H%
}:C(\overline{\Omega })\rightarrow C(\overline{\Omega })$ defined by%
\begin{equation}
\left( \mathcal{H}f\right) (x):=\int_{\Omega }H_{\Omega ,m}\left( x,y\right)
f(y)dy.  \label{hdef}
\end{equation}%
For later use we also set $\mathcal{D}=\mathcal{H}-\mathcal{G}$, i.e.%
\begin{equation}
\left( \mathcal{D}f\right) \left( x\right) :=\hat{c}_{2}\ w\left( x\right)
\int_{\Omega }w\left( y\right) f\left( y\right) dy,  \label{ddef}
\end{equation}%
which is well-defined for $f\in L^{1}\left( \Omega \right) $ and bounded as
operator from $L^{1}\left( \Omega \right) $ to $C^{2m,\gamma }(\overline{%
\Omega })$. Note that $w\in W_{0}^{m,q}(\Omega )$ for any $q\in \left(
1,\infty \right) $ and hence $\mathcal{D}$ can even be extended to $%
W^{-m,p}(\Omega ):=\left( W_{0}^{m,p/\left( p-1\right) }(\Omega )\right)
^{\prime }$.

For $n>2m$ one finds from (\ref{grsestbis}) and (\ref{Hest}) that there is $%
C_{2,\Omega ,m}>0$ such that
\begin{equation}
0\leq H_{\Omega ,m}\left( x,y\right) \leq C_{2,m}\ \left\vert x-y\right\vert
^{2m-n}\text{ for all }\left( x,y\right) \in \overline{\Omega }\times\overline{\Omega },
\label{compRP}
\end{equation}%
Hence from the Hardy-Littlewood-Sobolev Theorem of fractional integration
(see \cite[Theorem 1, p. 119]{Stein}) one finds the first two statements of:

\begin{lemma}
\label{rplem}Let $\Omega $ be a bounded domain and $\partial \Omega \in
C^{2m,\gamma }$ for some $\gamma \in \left( 0,1\right) $, and let $p\in %
\left[ 1,\infty \right) $. Then $\mathcal{H}f$ in (\ref{hem})-(\ref{hdef}) is
well-defined for all $f\in L^{p}\left( \Omega \right) $:

\begin{enumerate}
\item For all $p\in \left[ 1,\infty \right) $ and $f\in L^{p}\left( \Omega
\right) $ the integral in (\ref{hdef}) is absolute convergent for almost
every $x\in \Omega $.

\item Suppose $n>2m$. For all $p\in \left( 1,\frac{n}{2m}\right) $ and $q\in %
\left[ 1,\frac{np}{n-2mp}\right] $ there exist constants $C_{p,q}>0$
independent of $f\in L^{p}\left( \Omega \right) $, such that
\begin{equation}
\left\Vert \mathcal{H}f\right\Vert _{L^{q}\left( \Omega \right) }\leq
C_{p,q}\ \left\Vert f\right\Vert _{L^{p}\left( \Omega \right) }.
\label{Soblem}
\end{equation}

\item For all $p\geq 1$ with $p>\frac{n}{2m}$ there exist constants $C_{p}>0$
independent of $f\in L^{p}\left( \Omega \right) $, such that
\begin{equation}
\left\Vert \mathcal{H}f\right\Vert _{L^{\infty }\left( \Omega \right) }\leq
C_{p}\ \left\Vert f\right\Vert _{L^{p}\left( \Omega \right) }.
\end{equation}
\end{enumerate}
\end{lemma}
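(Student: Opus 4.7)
The overall plan is to use the pointwise bound (\ref{compRP}) on $H_{\Omega,m}$ (for $n>2m$), together with the explicit forms of $H$ from (\ref{Hest}) in the borderline and subcritical dimensional cases, to control $\mathcal{H}$ by standard singular-integral/convolution bounds on $\Omega$. Since $\mathcal{H}-\mathcal{D}=\mathcal{G}$ and $\mathcal{D}$ is patently bounded, the content really is in estimating $\mathcal{H}$. The kernel $H_{\Omega,m}$ is of the same order at the diagonal as the Riesz kernel $|x-y|^{2m-n}$ (for $n>2m$), has a logarithmic diagonal singularity (for $n=2m$), or is bounded (for $n<2m$), so across the three cases the required mapping properties reduce to uniform integrability of $H_{\Omega,m}(x,\cdot)$ in $y$.

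For (1), the plan is to apply Fubini--Tonelli to
\begin{equation*}
\int_{\Omega}\int_{\Omega}H_{\Omega,m}(x,y)\,|f(y)|\,dy\,dx
=\int_{\Omega}|f(y)|\Bigl(\int_{\Omega}H_{\Omega,m}(x,y)\,dx\Bigr)dy,
\end{equation*}
and to check from (\ref{Hest}) and (\ref{compRP}) that the inner integral is bounded uniformly in $y$; this holds because $|x-y|^{2m-n}$ is locally integrable for $n>2m$, while the kernel is at most logarithmically singular (resp.\ bounded) for $n=2m$ (resp.\ $n<2m$). Since $f\in L^{p}(\Omega)\subset L^{1}(\Omega)$ ($\Omega$ bounded), the double integral is finite, so the slice integral in (\ref{hdef}) converges absolutely a.e.\ in $x$.

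For (2), the clean route is to extend $f$ by zero to $\mathbb{R}^{n}$, dominate $|\mathcal{H}f(x)|\leq C\,\mathcal{I}_{2m}(|f|)(x)$ via (\ref{compRP}), and invoke the Hardy--Littlewood--Sobolev theorem \cite[Thm.~1, p.~119]{Stein} to obtain the endpoint bound at $q=q_{n,m,p}=np/(n-2mp)$. For intermediate exponents $q\in[1,q_{n,m,p})$, boundedness of $\Omega$ gives the interpolation estimate $\|\mathcal{H}f\|_{L^{q}(\Omega)}\leq |\Omega|^{1/q-1/q_{n,m,p}}\|\mathcal{H}f\|_{L^{q_{n,m,p}}(\Omega)}$, so the full range follows.

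For (3), which is the one genuinely new piece, the plan is a direct Hölder estimate with conjugate exponent $p'=p/(p-1)$,
\begin{equation*}
|\mathcal{H}f(x)|
\leq \Bigl(\int_{\Omega}H_{\Omega,m}(x,y)^{p'}\,dy\Bigr)^{1/p'}\|f\|_{L^{p}(\Omega)},
\end{equation*}
and the task reduces to showing that the inner integral is bounded uniformly in $x\in\overline{\Omega}$. For $n>2m$, (\ref{compRP}) gives $H_{\Omega,m}(x,y)^{p'}\lesssim |x-y|^{(2m-n)p'}$, and the integral is finite precisely when $(n-2m)p'<n$, i.e.\ $p>n/(2m)$; for $n=2m$, the logarithmic form of $H$ from (\ref{Hest}) combined with $w\in C^{2m,\gamma}(\overline{\Omega})$ yields a uniform-in-$x$ bound for every $p'<\infty$, which covers $p>1=n/(2m)$; for $n<2m$, $H_{\Omega,m}$ is bounded on $\overline{\Omega}\times\overline{\Omega}$, so the estimate is trivial for every $p\geq 1$. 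The expected main obstacle is only bookkeeping across the three dimensional regimes, in particular verifying the uniform bound up to the boundary in the $n=2m$ case, where one wants to avoid the apparent degeneration of $d(x)d(y)/|x-y|^{2}$ at the diagonal and at $\partial\Omega$; boundedness of $\Omega$ and the explicit $\log(1+\cdot)$ form make this routine.
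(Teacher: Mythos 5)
Your proposal is correct and follows essentially the same route as the paper: reduce to the Riesz potential via (\ref{compRP}) for $n>2m$, invoke the Hardy--Littlewood--Sobolev theorem for item (2), and use H\"older's inequality on the kernel (split across the three dimensional regimes) for item (3). The only cosmetic difference is in item (1), where the paper cites Stein's theorem directly for a.e.\ absolute convergence whereas you give the equivalent Fubini--Tonelli argument with a Schur-type uniform bound on $\int_{\Omega}H_{\Omega,m}(x,y)\,dx$; both are fine, and your phrasing is if anything slightly more explicit about the $n\leq 2m$ cases.
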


\begin{proof}
By (\ref{compRP}) we find for $f\geq 0$ and $f$ extended by $0$ outside of $%
\Omega $, that%
\begin{equation*}
0\leq \mathcal{H}f\leq c_{m}\mathcal{I}_{2m}f
\end{equation*}%
with $\mathcal{I}_{2m}f$ a Riesz potential of $f$ defined in (\ref{rp}). By
\cite[Theorem 1, p. 119]{Stein} the first item holds. Moreover, \cite[%
Theorem 1, p. 119]{Stein} also states that for $p\in \left( 1,\frac{n}{2m}%
\right) $ there exists $C_{\mathrm{HLS},p,n,m}>0$ such that
\begin{equation}
\left\Vert \mathcal{I}_{2m}f\right\Vert _{L^{\frac{np}{n-2mp}}\left( \Omega
\right) }\leq C_{\mathrm{HLS},p,n,m}\ \left\Vert f\right\Vert _{L^{p}\left(
\Omega \right) }.  \label{riepo}
\end{equation}%
With (\ref{riepo}) one finds (\ref{Soblem}) for positive $f$ and $q=\frac{np%
}{n-2mp}$. Since $\Omega $ is bounded, the estimate holds for all
\mbox{$q\in [ 1,\frac{np}{n-2mp}] $}. For general $f$ one splits by $%
f=f^{+}-f^{-}$, uses linearity, $\left\Vert f^{\pm }\right\Vert
_{L^{p}\left( \Omega \right) }\leq \left\Vert f\right\Vert _{L^{p}\left(
\Omega \right) }$ and finds (\ref{Soblem}) with twice the constant for $f$
with fixed sign.

If $n>2m$ and $p>\frac{n}{2m}$, then the third item follows from (\ref%
{compRP}) and the usual estimate by H\"{o}lder's inequality applied to the
Riesz potential:%
\begin{equation*}
\left\Vert \mathcal{H}f\right\Vert _{L^{\infty }\left( \Omega \right) }\leq
c_{m}^{\prime }\sup_{x\in \Omega }\left\Vert \left\vert x-\cdot \right\vert
^{2m-n}\right\Vert _{L^{p/\left( p-1\right) }\left( \Omega \right)
}\left\Vert f\right\Vert _{L^{p}\left( \Omega \right) }
\end{equation*}
For $n=2m$ the logarithmic singularity lies in $L^{q}$ for any $q<\infty $,
which gives the estimate by H\"{o}lder for any $p>1$. For $n<2m$ the kernel
of $\mathcal{H}$ is uniformly bounded, which yields the estimate for $p=1$
and hence for any $p\geq 1$.
\end{proof}

\begin{proposition}

\label{Hthe}Let $\Omega \subset \mathbb{R}^{n}$ be a bounded domain with $%
\partial \Omega \in C^{2m,\gamma }$ for some $\gamma \in \left( 0,1\right) $%
. Let $\mathcal{H}$ be defined by (\ref{hdef},\ref{hem}). Then for any $p\in
\left( 1,\infty \right) $ there exists $C_{m,p}>0$ such that for all $f\in
L^{p}(\Omega )$,\ it holds that $\mathcal{H}f\in W^{2m,p}\left( \Omega
\right) \cap W_{0}^{m,p}(\Omega )$ and%
\begin{equation}
\left\Vert \mathcal{H}f\right\Vert _{W^{2m,p}\left( \Omega \right) }\leq
C_{m,p}\left\Vert f\right\Vert _{L^{p}\left( \Omega \right) }.  \label{Hreg}
\end{equation}
\end{proposition}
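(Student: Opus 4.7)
The plan is to exploit the decomposition $\mathcal{H} = \mathcal{G} + \mathcal{D}$ that was introduced just before the proposition, and to estimate each piece separately in $W^{2m,p}(\Omega)\cap W_0^{m,p}(\Omega)$. This reduces the problem to one classical ingredient (the Agmon--Douglis--Nirenberg estimate) and one essentially trivial ingredient (the rank-one operator $\mathcal{D}$).

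First I would treat $\mathcal{G}f$. Since $\mathcal{G}$ is by construction the Green operator for the polyharmonic Dirichlet problem (\ref{poly}), for $f\in L^p(\Omega)$ the function $u=\mathcal{G}f$ is the unique solution of (\ref{poly}) in $W^{2m,p}(\Omega)\cap W_0^{m,p}(\Omega)$. Uniqueness in this class is guaranteed because $\partial\Omega\in C^{2m,\gamma}$ and $p\in(1,\infty)$, so the ADN estimate (\ref{adnest}) applies and yields
\begin{equation*}
\|\mathcal{G}f\|_{W^{2m,p}(\Omega)}\le C_{m,p}\|f\|_{L^p(\Omega)},
\end{equation*}
with $\mathcal{G}f\in W_0^{m,p}(\Omega)$ automatic from the boundary conditions in (\ref{poly}).

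Next I would treat $\mathcal{D}f$. By definition (\ref{ddef}), $\mathcal{D}f(x) = \hat c_2\,\bigl(\int_\Omega w(y)f(y)\,dy\bigr)\,w(x)$ is a scalar multiple of the fixed function $w$. Lemma \ref{Dlem} provides $w\in C^{2m,\gamma}(\overline{\Omega})$, and the bound $w\le c_2 d^m$ combined with $w\in C^{2m,\gamma}$ forces $\partial^\alpha w=0$ on $\partial\Omega$ for $|\alpha|\le m-1$, so that $w\in W^{2m,p}(\Omega)\cap W_0^{m,p}(\Omega)$ (this was noted in the text just before the proposition). Estimating the scalar by H\"older's inequality against $w\in L^{p/(p-1)}(\Omega)$ gives
\begin{equation*}
\|\mathcal{D}f\|_{W^{2m,p}(\Omega)}\le \hat c_2\,\|w\|_{L^{p/(p-1)}(\Omega)}\,\|w\|_{W^{2m,p}(\Omega)}\,\|f\|_{L^p(\Omega)},
\end{equation*}
with an implicit constant depending only on $m$, $p$ and $\Omega$.

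Summing the two contributions yields $\mathcal{H}f\in W^{2m,p}(\Omega)\cap W_0^{m,p}(\Omega)$ together with (\ref{Hreg}). There is no serious obstacle here: the core work has been outsourced to (\ref{adnest}), and the role of the perturbation $\mathcal{D}$ is precisely to shift the burden of regularity at the boundary onto the smooth function $w$, which the lemma has already prepared. The only point worth verifying carefully is that $w$ indeed lies in $W_0^{m,p}(\Omega)$, which as remarked above follows directly from Lemma \ref{Dlem} and the definition $w=e_1^m$.
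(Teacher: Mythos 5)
Your decomposition $\mathcal{H}=\mathcal{G}+\mathcal{D}$ and your treatment of the rank-one part $\mathcal{D}$ (H\"older against $w\in L^{p/(p-1)}$, $w=e_1^m\in C^{2m,\gamma}(\overline{\Omega})$ with vanishing derivatives up to order $m-1$ on $\partial\Omega$) are exactly the ingredients the paper uses, and that half is fine. The gap is in the other half: you assert that ``by construction'' the function $\mathcal{G}f$, i.e.\ the function defined pointwise by the integral (\ref{green}) against the Green kernel, is the unique solution of (\ref{poly}) in $W^{2m,p}(\Omega)\cap W_0^{m,p}(\Omega)$ for \emph{every} $f\in L^p(\Omega)$. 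That identification is not free. The Green-function representation (and the kernel estimate (\ref{grsest}) from \cite{GRobS}) is available for sufficiently regular right-hand sides, e.g.\ continuous $f$; for general $f\in L^p(\Omega)$ with $1<p<\frac{n}{2m}$ the integral defining $(\mathcal{G}f)(x)$ converges only for a.e.\ $x$ (this is item 1 of Lemma \ref{rplem} together with the finiteness of $\mathcal{D}f$), and it must be \emph{shown} that this a.e.-defined function coincides with the ADN solution $u\in W^{2m,p}(\Omega)\cap W_0^{m,p}(\Omega)$. For the $\mathcal{G}$-part this identification is precisely the nontrivial content of the proposition, so asserting it amounts to assuming what is to be proved.

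The paper closes exactly this gap by an approximation argument which your proposal lacks: mollify $f$, observe that for the continuous data $f_\varepsilon$ the identity $\mathcal{G}f_\varepsilon=\mathcal{H}f_\varepsilon-\mathcal{D}f_\varepsilon$ holds and $\mathcal{G}f_\varepsilon$ solves (\ref{poly}) with right-hand side $f_\varepsilon$; then combine the $L^p\to L^q$ bound for $\mathcal{H}$ from Lemma \ref{rplem}, the $L^1\to C^{2m,\gamma}$ bound (\ref{Dest}) for $\mathcal{D}$, and the ADN estimate $\left\Vert u-\mathcal{G}f_\varepsilon\right\Vert_{W^{2m,p}(\Omega)}\leq C\left\Vert f-f_\varepsilon\right\Vert_{L^p(\Omega)}$ to pass to the limit $\varepsilon\downarrow 0$ and conclude $(\mathcal{H}-\mathcal{D})f=u$ almost everywhere. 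Only after this step does $\mathcal{H}f=u+\mathcal{D}f$ inherit membership in $W^{2m,p}(\Omega)\cap W_0^{m,p}(\Omega)$ and the estimate (\ref{Hreg}). If you insert this limiting argument (or some equivalent justification that the kernel operator $\mathcal{G}$ agrees with the $L^p$ solution operator), your proof coincides with the paper's.
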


\begin{proof}
For $0\leq f\in L^{p}\left( \Omega \right) $ let $f_{\varepsilon }:=\varphi
_{\varepsilon }\ast f\in C(\overline{\Omega })$ denote the usual
mollification, with $\varphi _{\varepsilon }$ the mollifier from Friedrichs
and $f$ extended by $0$ outside of $\Omega $. By Lemma \ref{rplem} and
suitable \thinspace $p,q$, that is $\frac{1}{q}\geq \frac{1}{p}-\frac{2m}{n}$%
, one finds%
\begin{equation}
\left\Vert \mathcal{H}f-\mathcal{H}f_{\varepsilon }\right\Vert
_{L^{q}(\Omega )}\leq C_{p,q}\left\Vert f-f_{\varepsilon }\right\Vert
_{L^{p}(\Omega )}.  \label{heps}
\end{equation}%
Also $\mathcal{D}$ is well-defined for $f\in L^{1}\left( \Omega \right) $,
which contains $L^{p}(\Omega )$, and since $w\in C^{2m,\gamma }(\overline{%
\Omega })$ holds by Lemma \ref{Dlem}, one even finds for some $c_{m}>0$ that
\begin{equation}
\left\Vert \mathcal{D}f\right\Vert _{C^{2m,\gamma }(\overline{\Omega })}\leq
c_{m}~\left\Vert f\right\Vert _{L^{1}\left( \Omega \right) },  \label{Dest}
\end{equation}%
and hence also that
\begin{equation}
\left\Vert \mathcal{D}f-\mathcal{D}f_{\varepsilon }\right\Vert
_{L^{q}(\Omega )}\leq c_{m,p,q}^{\prime }~\left\Vert f-f_{\varepsilon
}\right\Vert _{L^{p}(\Omega )}.  \label{deps}
\end{equation}%
Replacing $f$ on the right-hand side of (\ref{poly}) by $f_{\varepsilon }$,
we find as solution
\begin{equation}
\mathcal{G}f_{\varepsilon }=\mathcal{H}f_{\varepsilon }-\mathcal{D}%
f_{\varepsilon }.  \label{eee}
\end{equation}%
Letting $u\in W^{2m,p}(\Omega )\cap W_{0}^{m,p}(\Omega )$ denote the
solution of (\ref{poly}) for $f$ on the right-hand side, we find by \cite%
{ADN1} that
\begin{equation*}
\left\Vert u-\mathcal{G}f_{\varepsilon }\right\Vert _{W^{2m,p}\left( \Omega
\right) }\leq C_{\mathrm{ADN},2m,p}\left\Vert f-f_{\varepsilon }\right\Vert
_{L^{p}\left( \Omega \right) }\rightarrow 0\text{ for }\varepsilon
\downarrow 0.
\end{equation*}%
From (\ref{heps}), (\ref{deps}) and (\ref{eee}) one finds%
\begin{equation*}
\left\Vert \left( \mathcal{H}-\mathcal{D}\right) f-\mathcal{G}f_{\varepsilon
}\right\Vert _{L^{q}(\Omega )}\leq c_{m,p,q}~\left\Vert f-f_{\varepsilon
}\right\Vert _{L^{p}(\Omega )}\rightarrow 0\text{ for }\varepsilon
\downarrow 0.
\end{equation*}
Hence $\left( \mathcal{H}-\mathcal{D}\right) f=u\in W^{2m,p}(\Omega )\cap
W_{0}^{m,p}(\Omega )$. With $\mathcal{D}f\in C^{2m,\gamma }(\overline{\Omega
})\cap C_{0}^{m-1}(\overline{\Omega })$ one also finds
\begin{equation*}
\mathcal{H}f\in W^{2m,p}(\Omega )\cap W_{0}^{m,p}(\Omega ).
\end{equation*}
Moreover, there exists $C_{4,m,p}>0$ such that
\begin{gather*}
\left\Vert \mathcal{H}f\right\Vert _{W^{2m,p}\left( \Omega \right)
}=\left\Vert u+\mathcal{D}f\right\Vert _{W^{2m,p}\left( \Omega \right) }\leq
\left\Vert u\right\Vert _{W^{2m,p}\left( \Omega \right) }+\left\Vert
\mathcal{D}f\right\Vert _{W^{2m,p}\left( \Omega \right) } \\
\leq C_{\mathrm{ADN},2m,p}\left\Vert f\right\Vert _{L^{p}\left( \Omega
\right) }+c_{m}^{\prime }\ \left\Vert f\right\Vert _{L^{1}\left( \Omega
\right) }\leq C_{4,m,p}\left\Vert f\right\Vert _{L^{p}\left( \Omega \right)
}.
\end{gather*}
\end{proof}

It remains to combine these results to the statement of Theorem \ref%
{plusminusregthm}. \medskip

\begin{proof}[Proof of Theorem \protect\ref{plusminusregthm}]
Since $\mathcal{H}$ and $\mathcal{D}$ preserve the sign, we may consider
separately the solutions $u_{+}$ and $u_{-}$ of%
\begin{equation*}
\left\{
\begin{array}{cc}
\left( -\Delta \right) ^{m}u_{\pm }=f^{\pm } & \text{in }\Omega , \\
D^{\alpha }u_{\pm }=0\text{ for }\left\vert \alpha \right\vert <m & \text{on
}\partial \Omega .%
\end{array}%
\right.
\end{equation*}%
Here $u_{\pm }$ does not have a sign but just denotes the solution parts
depending on the signed splitting of the right-hand side $f^{\pm }$.
One finds%
\begin{gather*}
u_{+}\left( x\right) =\left( \mathcal{H}f^{+}\right) \left( x\right) -\left(
\mathcal{D}f^{+}\right) \left( x\right) , \\
u_{-}\left( x\right) =\left( \mathcal{H}f^{-}\right) \left( x\right) -\left(
\mathcal{D}f^{-}\right) \left( x\right) .
\end{gather*}%
So
\begin{equation*}
u\left( x\right) =\left( \mathcal{H}f^{+}\right) \left( x\right) -\left(
\mathcal{D}f^{+}\right) \left( x\right) -\left( \mathcal{H}f^{-}\right)
\left( x\right) +\left( \mathcal{D}f^{-}\right) \left( x\right) .
\end{equation*}%
We split this expression into two parts:%
\begin{gather*}
u^{\oplus }\left( x\right) =\left( \mathcal{H}f^{+}\right) \left( x\right)
+\left( \mathcal{D}f^{-}\right) \left( x\right), \\
u^{\ominus }\left( x\right) =\left( \mathcal{H}f^{-}\right) \left( x\right)
+\left( \mathcal{D}f^{+}\right) \left( x\right),
\end{gather*}%
and $u\left( x\right) =u^{\oplus }\left( x\right) -u^{\ominus }\left(
x\right) $ with both parts $u^{\oplus },u^{\ominus }$ being nonnegative. For
the $\mathcal{H}$-part of $u^{\oplus },u^{\ominus }$ we use the results of
Proposition \ref{Hthe}. The estimate in (\ref{Dest}) takes care of the $\mathcal{%
D}$-part.
\end{proof}

\end{document}